\newcommand{\eps}{\varepsilon}
\renewcommand{\le}{\leqslant}
\renewcommand{\ge}{\geqslant}
\def\N{{\mathbb{N}}}
\newtheorem{theorem}{Theorem}[section]
\newtheorem{lemma}[theorem]{Lemma}
\newtheorem{conjecture}[theorem]{Conjecture}
\crefname{subsection}{subsection}{subsections}
\title{A note on the maximum ratio between \\ chromatic number and clique number}
\author{Igor Araujo, Rafael Filipe, and Rafael Miyazaki}
\address{Department of Mathematics, University of Illinois Urbana-Champaign, Urbana, Illinois 61801, USA}
\email{igoraa2@illinois.edu}
\address{IMPA, Estrada Dona Castorina 110, Jardim Botanico, Rio de Janeiro, 22460-320, Brasil} \email{rafael.santos@impa.br}
\address{Department of Mathematics, Emory University, Atlanta, Georgia, USA}
\email{rafael.kazuhiro.miyazaki@emory.edu}
\thanks{During this work, Igor Araujo was partially supported by Parker Memorial Fellowship, Schark Fellowship, and NSF RTG DMS-1937241  and Rafael Filipe was supported by CNPq.}
\date{}
\begin{document}
\begin{abstract}
    Let $f(n)$ be the maximum, over all graphs $G$ on $n$ vertices, of the ratio $\frac{\chi(G)}{\omega(G)}$, where $\chi(G)$ denotes the chromatic number of $G$ and $\omega(G)$ the clique number of $G$. In 1967, Erd\H{o}s showed that 
    \[ \Big( \frac{1}{4} +o(1) \Big) \frac{n}{(\log_2 n)^2} \le f(n) \le \big( 4+o(1) \big) \frac{n}{(\log_2 n)^2} .\]
    We show that 
    \[ f(n) \le \big(c+o(1)\big) \frac{n}{(\log_2 n)^2}\]
    for some $c<3.72$.
    This follows from recent improvements in the asymptotics of Ramsey numbers and is the first improvement in the asymptotics of $f(n)$ established by Erd\H{o}s.
\end{abstract}

\maketitle

\section{Introduction}\label{sec:intro}

Ramsey Theory is a fundamental area of Graph Theory concerned with the existence of unavoidable structures in large graphs. The \emph{Ramsey number} $R(s,t)$ is the minimum $n$ such that every red/blue edge-coloring of the complete graph on $n$ vertices contains either a clique on $s$ vertices colored red or a clique on $t$ vertices colored blue. 

While Ramsey~\cite{Ramsey1930} proved in 1930 that Ramsey numbers are finite, the first explicit upper bound was obtained by Erd\H{o}s and Szekeres~\cite{E-Sz} in 1935, who showed that 
\begin{equation} \label{eq:E-Sz}
    R(s,t) \le \binom{s+t-2}{s-1}.
\end{equation}
Note that in the \emph{diagonal} case, this shows that $R(k,k)\le 4^{k+o(k)}$. 
On the other hand, Erd\H{o}s~\cite{Ramsey.lower} established the lower bound $R(k,k)\ge 2^{k/2+o(k)}$ in 1947 in one of the first uses of the probabilistic method. Subsequent improvements on the lower bound have only refined it by a constant multiplicative factor (see, e.g., Spencer’s bound obtained via the Lov\'asz Local Lemma~\cite{Spencer.LLL}). Since then, one of the major problems in modern combinatorics has been to determine the correct asymptotic behavior of the diagonal Ramsey number $R(k,k)$. More specifically, one could ask whether the limit
\begin{equation}\label{limit1}
    \lim_{k \to \infty} \frac{\log(R(k,k))}{k}
\end{equation} 
exists and, if so, what is its value\footnote{This question appears as Problem 1 in~\cite{chung}, as Problem 77 in~\url{https://www.erdosproblems.com/77}, and on the webpage~\url{https://mathweb.ucsd.edu/~erdosproblems/}.}.

In 1967, Erd\H{o}s~\cite{erdos} asked a seemingly unrelated question. Define $f(n)$ to be the maximum, over all graphs $G$ on $n$ vertices, of the ratio $\frac{\chi(G)}{\omega(G)}$, where $\chi(G)$ denotes the chromatic number of $G$ and $\omega(G)$ the clique number of $G$. Formally, we have
\begin{equation*}
    f(n)\coloneq \max \Big\{ \ \frac{\chi(G)}{\omega(G)} \ : 
    \ G \text{ is a graph on } n \text{ vertices } \Big\}.
\end{equation*}

Erd\H{o}s showed that $f(n) = \Theta(n/(\log n)^2)$ and asked\footnote{This question also appears in~\cite{erdos2}, as Problem 627 in~\url{https://www.erdosproblems.com/627}, as Problem 53 in~\cite{chung}, and on the webpage~\url{https://mathweb.ucsd.edu/~erdosproblems/}.} whether the following limit exists: 
\begin{equation}\label{limit2}
    \lim_{n \to \infty} \frac{f(n)}{n/(\log n)^2}.
\end{equation} 
In this note, we establish a connection between these two problems. 
The upper bound on $f(n)$ from~\cite{erdos} combines~\eqref{eq:E-Sz} with the observation that, if $\binom{s+t}{t} \ge n$, then 
$$st \ge \lfloor{k/2}\rfloor\lfloor{(k+1)/2}\rfloor,$$
where $k$ is the smallest integer satisfying $\binom{k}{\lfloor{k/2}\rfloor} \ge n$.
This motivates Conjecture~\ref{conj:weak.mult.RDC} below as a natural bridge to extend Erd\H{o}s' proof. 

\begin{conjecture}\label{conj:weak.mult.RDC}
    For every $s, t, k \in \N$ such that $st \le k^2$ we have that    
    \begin{equation*}    
        R(s,t) \le R(k, k).
    \end{equation*}
\end{conjecture}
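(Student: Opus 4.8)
The plan is to remove with monotonicity the cases that are immediate, reduce the remainder to comparisons between near-diagonal Ramsey numbers, and then isolate where the argument stalls.

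\emph{Monotonicity.} Ramsey numbers satisfy $R(a,b)\le R(a',b')$ whenever $a\le a'$ and $b\le b'$, since a red/blue colouring of $K_n$ with no red $K_a$ and no blue $K_b$ also has no red $K_{a'}$ and no blue $K_{b'}$. If $st\le k^2$ then $s$ and $t$ cannot both exceed $k$, so by the symmetry $R(s,t)=R(t,s)$ we may assume $s\le\sqrt{st}\le k$; and if moreover $t\le k$, then $R(s,t)\le R(k,k)$ and we are done. Otherwise $t>k$, and since $t\le k^2/s$ we have $t\le\lfloor k^2/s\rfloor$, so $R(s,t)\le R\big(s,\lfloor k^2/s\rfloor\big)$. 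Hence it suffices to prove
\[
  R\big(s,\lfloor k^2/s\rfloor\big)\le R(k,k)\qquad\text{for all }2\le s\le k,
\]
equivalently that $\max\{R(a,b):a,b\ge 1,\ ab\le k^2\}=R(k,k)$: the Ramsey number is maximised on the diagonal along the hyperbola $ab=k^2$. Note that $s\lfloor k^2/s\rfloor\in[k^2-k,\,k^2]$, so the comparison is genuinely with a pair of the ``same size'' as $(k,k)$.

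\emph{The easy regime.} If $s$ is small relative to $k$, crude estimates suffice: the Erd\H{o}s--Szekeres bound gives $R\big(s,\lfloor k^2/s\rfloor\big)\le\binom{s+\lfloor k^2/s\rfloor-2}{s-1}$, and a short computation bounds the base-$2$ logarithm of the right-hand side by $O\big(s\log_2(2k/s)\big)$. Hence for every $\eta>0$ there is $\delta>0$ such that $\log_2 R\big(s,\lfloor k^2/s\rfloor\big)\le\eta k$ for all large $k$ and all $2\le s\le\delta k$; since $R(k,k)\ge 2^{k/2+o(k)}$ by Erd\H{o}s's probabilistic bound, taking any $\eta<1/2$ settles the conjecture for all $s\le\delta k$.

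\emph{The obstacle.} What remains is the range $\delta k<s\le k$, where both $R\big(s,\lfloor k^2/s\rfloor\big)$ and $R(k,k)$ are of the form $2^{\Theta(k)}$; and because $s\lfloor k^2/s\rfloor$ differs from $k^2$ by only $O(k)$, they have --- conjecturally --- the \emph{same} exponential rate. So no inequality pitting an off-diagonal upper bound against the diagonal lower bound can separate them, and even granting that the limit $\lambda$ in~\eqref{limit1} exists and that $\log_2 R(a,b)\sim\lambda\sqrt{ab}$ governs the balanced off-diagonal regime, one only obtains $\log_2 R\big(s,\lfloor k^2/s\rfloor\big)\sim\lambda k\sim\log_2 R(k,k)$, which does not decide the comparison. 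Resolving the conjecture here therefore seems to require a genuine monotonicity principle for Ramsey numbers: its extremal instance already asks whether $R(k-1,k+1)\le R(k,k)$, which is open and appears no easier than the analogous additive question --- whether $R$ is maximised on the diagonal along the lines $a+b=\mathrm{const}$ --- itself not known in general. (One might instead try to build a good colouring for $(k,k)$ out of one for $(s,t)$, say via lexicographic products, but such operations multiply clique and independence numbers and give hopelessly weak bounds.) This is, in the end, why the statement is recorded here as a conjecture.
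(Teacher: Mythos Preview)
The statement is recorded in the paper as a conjecture, not a theorem: the paper offers no proof of it. It is used only as a hypothesis in the theorem linking the two limits, and is discussed in the appendix alongside the Ramsey Diagonal Conjecture. Your proposal is, correspondingly, not a proof but an honest analysis of why the statement resists proof, and you say so explicitly in your final sentence.

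The partial progress you record is correct. Monotonicity disposes of the case $\max(s,t)\le k$; for $s\le\delta k$ the Erd\H{o}s--Szekeres bound $R\big(s,\lfloor k^2/s\rfloor\big)\le\binom{s+\lfloor k^2/s\rfloor-2}{s-1}=2^{O(s\log(k/s))}$ together with the probabilistic lower bound $R(k,k)\ge 2^{k/2+o(k)}$ suffices; and the genuine obstruction is indeed the near-diagonal regime $\delta k<s\le k$, whose extreme instance $R(k-1,k+1)\le R(k,k)$ is open --- precisely the case the paper singles out in its appendix. So there is no proof in the paper to compare against, and your assessment of the difficulty matches the paper's own.
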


We were unable to find a previous statement of Conjecture~\ref{conj:weak.mult.RDC} in the literature, but it is closely related to the Diagonal Conjecture (Conjecture~\ref{conj:RDC}; see Section~\ref{sec:conj} for a more detailed discussion). 
The following theorem establishes a connection between the limits in \eqref{limit1} and \eqref{limit2}, should they exist, under the assumption of Conjecture~\ref{conj:weak.mult.RDC}. Throughout this note, all logarithms are in base 2.

\begin{theorem}\label{thm:diagonal_to_ratio}
    If Conjecture~\ref{conj:weak.mult.RDC} holds, and $\lim\limits_{k \to \infty} \frac{\log(R(k,k))}{k}$ exists and is equal to $\ell$, then
    \begin{equation*}     
        f(n)=\big(\ell^2+o(1)\big)\frac{n}{(\log n)^2} .
    \end{equation*} 
\end{theorem}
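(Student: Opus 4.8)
The plan is to prove separately that $f(n) \ge \big(\mathcal{L}^2 - o(1)\big)\frac{n}{(\log n)^2}$ and $f(n) \le \big(\mathcal{L}^2 + o(1)\big)\frac{n}{(\log n)^2}$; only the second inequality will use Conjecture~\ref{conj:weak.mult.RDC}. The hypothesis is equivalent to $R(k,k) = 2^{(\mathcal{L}+o(1))k}$, and I will use the elementary consequence that, writing $\kappa(m)$ for the largest integer $k$ with $R(k,k) \le m$, one has $\kappa(m) = (1+o(1))\frac{\log m}{\mathcal{L}}$ as $m\to\infty$: this follows by taking logarithms in $R(\kappa(m),\kappa(m)) \le m < R(\kappa(m)+1,\kappa(m)+1)$, dividing by $\kappa(m)$, and letting $m\to\infty$ (here we use $\mathcal{L} \ge \tfrac12 > 0$, from Erd\H{o}s's lower bound, so that division by $\mathcal{L}$ is legitimate).

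\emph{Lower bound.} Put $k = \kappa(n)$, so $n < R(k+1,k+1)$ and hence there is a graph $G_0$ on $n$ vertices with $\omega(G_0) \le k$ and $\alpha(G_0) \le k$. Then $\chi(G_0) \ge n/\alpha(G_0) \ge n/k$, so
\[ f(n) \ge \frac{\chi(G_0)}{\omega(G_0)} \ge \frac{n}{k^2} = \big(\mathcal{L}^2 - o(1)\big)\frac{n}{(\log n)^2}, \]
using $k = \kappa(n) = (1+o(1))\frac{\log n}{\mathcal{L}}$. No use of the conjecture is made here.

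\emph{Upper bound.} Fix a graph $G$ on $n$ vertices and write $\omega = \omega(G)$; I must show $\chi(G)/\omega \le \big(\mathcal{L}^2 + o(1)\big)\frac{n}{(\log n)^2}$. The basic tool is the greedy bound: since $G$ has no $K_{\omega+1}$, any induced subgraph on at least $R(\omega+1,t)$ vertices contains an independent set of size $t$, so deleting such sets one at a time and colouring the remaining fewer-than-$R(\omega+1,t)$ vertices individually gives $\chi(G) \le \frac{n}{t} + R(\omega+1,t)$ for every $t \ge 1$. I would split into three ranges. If $\omega \ge (\log n)^2/\mathcal{L}^2$ then trivially $\chi(G)/\omega \le n/\omega \le \mathcal{L}^2 n/(\log n)^2$. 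If $\omega < \log\log n$, take $t$ maximal with $R(\omega+1,t) \le \sqrt n$; the Erd\H{o}s--Szekeres bound $R(\omega+1,t) \le \binom{\omega+t-1}{\omega} \le (\omega+t)^\omega$ forces $t \ge \tfrac12 n^{1/(2\omega)}$ for large $n$, hence $\chi(G) \le 3n^{1-1/(2\log\log n)}$ and so $\chi(G)/\omega = o\big(n/(\log n)^2\big)$. The substantive case is $\log\log n \le \omega < (\log n)^2/\mathcal{L}^2$: set $M = n^{1-1/\log\log n}$, which is $o\big(n/(\log n)^2\big)$, and let $t^\star$ be maximal with $R(\omega+1,t^\star) \le M$. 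A short computation with Erd\H{o}s--Szekeres gives $R(\omega+1,\lceil\log\log n\rceil) \le 2^{O((\log\log n)^2)} \le M$, so $t^\star \to\infty$ uniformly over this range. Now $R(\omega+1,t^\star+1) > M \ge R(\kappa(M),\kappa(M))$, so the contrapositive of Conjecture~\ref{conj:weak.mult.RDC} yields $(\omega+1)(t^\star+1) > \kappa(M)^2 = (1+o(1))\frac{(\log n)^2}{\mathcal{L}^2}$, where we used $\log M = (1+o(1))\log n$. Since $\omega \ge \log\log n$ and $t^\star$ both tend to infinity, this upgrades to $\omega t^\star \ge (1-o(1))\frac{(\log n)^2}{\mathcal{L}^2}$, and therefore $\chi(G)/\omega \le \frac{n}{\omega t^\star} + \frac{M}{\omega} \le \big(\mathcal{L}^2 + o(1)\big)\frac{n}{(\log n)^2}$. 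Combining the three ranges proves the upper bound, and with the lower bound the theorem follows.

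The step I expect to require the most care is the upper bound, and specifically the choice of the truncation level: $M$ must be taken small enough that both the error term $R(\omega+1,t^\star)$ and the $\sqrt n$--type leftover are $o\big(n/(\log n)^2\big)$, yet large enough that $\kappa(M) = (1+o(1))\log n/\mathcal{L}$ (any $M = n^{1-o(1)}$ with the exponent tending to $1$ slowly enough works). One also has to guarantee that the passage from $(\omega+1)(t^\star+1)$ to $\omega t^\star$ costs nothing, which is exactly why the regimes of bounded clique number and of very large clique number have to be peeled off and treated by cruder estimates; on those regimes the ratio is anyway far below $n/(\log n)^2$. Everything else --- the greedy colouring bound and the asymptotics $\kappa(m)\sim \log m/\mathcal{L}$ --- is routine.
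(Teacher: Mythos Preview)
Your proof is correct and rests on the same two ingredients as the paper's: the lower bound via a Ramsey-extremal graph on $n$ vertices, and the upper bound via the greedy colouring estimate (the paper's Lemma~\ref{lem:erdos}) together with Conjecture~\ref{conj:weak.mult.RDC} to pass from off-diagonal to diagonal Ramsey data. The organisation differs, however. The paper introduces the auxiliary quantity $M=\limsup_{t\to\infty}\max_{s\le t}\log R(s,t)/\sqrt{st}$ and first proves, unconditionally, that $\limsup g(n)\le M^2$ (Theorem~\ref{thm:upper}) by showing that every $m$-vertex induced subgraph $G'$ with $m\ge n/(\log n)^3$ satisfies $\alpha(G')\,\omega(G')\ge (M^{-2}+o(1))(\log m)^2$; this feeds into Lemma~\ref{lem:erdos} uniformly in $\omega(G)$, so no case-split on the clique number is needed. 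Only afterwards is Conjecture~\ref{conj:weak.mult.RDC} invoked (Fact~\ref{fact}) to identify $M$ with $\mathcal{L}$. Your argument instead applies the conjecture inside the upper-bound proof, which forces the three-way split on $\omega$ to guarantee that $\omega$ and $t^\star$ both tend to infinity. Both routes are valid; the paper's modular version has the side benefit of yielding the unconditional numerical bound of Theorem~\ref{thm:g_upper}, while yours is more self-contained for the present statement.
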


In~\cite{erdos}, Erd\H{o}s mentioned that by their method, it would be easy to prove that 
\begin{equation*}
    \big(c+o(1)\big) \frac{n}{(\log n)^2} \le f(n) 
    \le \big(C+o(1) \big)  \frac{n}{(\log n)^2},
\end{equation*}
for constants $c=\frac{1}{4}$ and $C=1$. However, this appears to be a typographical error\footnote{Indeed, if $f(n) \le (1+o(1))\frac{n}{(\log n)^2}$ is true, then Theorem~\ref{thm:upper} would imply that $R(k,k)\le 2^{k+o(k)}$.}, as a careful application of their method yields the constants $c=\frac{1}{4}$ and $C=4$. 

Aided by recent breakthrough developments in Ramsey theory, we can prove the following.

\begin{theorem}\label{thm:g_upper}
    The function $f(n)$ satisfies
\begin{equation}\label{eq:thm.bound.f.1}
 f(n) \le \big( 3.71943+o(1) \big) \frac{n}{(\log n)^2}.
\end{equation}
Moreover, if Conjecture~\ref{conj:weak.mult.RDC} holds, then
 \begin{equation}\label{eq:thm.bound.f.2}
    f(n) \le \big( 3.70831 + o(1) \big) \frac{n}{(\log n)^2}.
\end{equation}
\end{theorem}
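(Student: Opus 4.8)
The plan is to run Erd\H{o}s's original argument --- colour $G$ by repeatedly removing large independent sets, whose existence is supplied by Ramsey's theorem --- but with the recent upper bounds for Ramsey numbers in place of Erd\H{o}s--Szekeres, and then to optimise the resulting trade-off in $\omega(G)$. Fix $G$ on $n$ vertices, put $\omega:=\omega(G)$, and set $m:=n/(\log n)^{3}$. As no induced subgraph of $G$ contains a clique on $\omega+1$ vertices, every induced subgraph on at least $m$ vertices has an independent set of size $a:=\max\{\,t\in\N : R(\omega+1,t)\le m\,\}$; removing maximum independent sets until fewer than $m$ vertices remain, then spending one colour per leftover vertex, gives $\chi(G)\le n/a+m+1$, and hence
\[
\frac{\chi(G)}{\omega(G)}\ \le\ \frac{n}{\omega a}+\frac{m+1}{\omega}.
\]
Since $(m+1)/\omega\le m+1=o\big(n/(\log n)^{2}\big)$, the problem reduces to a lower bound on the product $\omega\cdot a(n,\omega)$.

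Two ranges of $\omega$ require nothing. If $\omega<(\log n)^{1/2}$ then $R(\omega+1,t)\le\binom{\omega+t-1}{\omega}$ already forces $a\ge2^{\Omega(\sqrt{\log n})}$, so $\chi(G)/\omega\le\chi(G)=o\big(n/(\log n)^{2}\big)$; and once $\omega\ge(\log n)^{2}/c$ (with $c$ the target constant) the trivial bound $\chi(G)/\omega\le n/\omega\le c\,n/(\log n)^{2}$ already suffices. On the remaining window $\omega\to\infty$, so the additive terms all vanish into a $(1+o(1))$ factor. Writing the best available upper bound as $\log_{2}R(s,t)\le(1+o(1))\,\beta\!\left(\tfrac{s}{s+t}\right)(s+t)$ for an exponent function $\beta\le H$ --- $H$ the base-$2$ binary entropy --- the substitution $p=\omega/(\omega+t)$ turns the boundary $R(\omega+1,t)=m$ into $\omega+t=(1+o(1))(\log_{2}n)/\beta(p)$, along which $\omega t=(1+o(1))\,p(1-p)(\log_{2}n)^{2}/\beta(p)^{2}$. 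Therefore
\[
f(n)\ \le\ \big(c+o(1)\big)\frac{n}{(\log n)^{2}},\qquad c=\Big(\min_{p\in(0,1)}\frac{p(1-p)}{\beta(p)^{2}}\Big)^{-1}.
\]

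It remains to insert the Ramsey bounds. With only Erd\H{o}s--Szekeres, $\beta=H$ and $\min_{p}p(1-p)/H(p)^{2}=1/4$, attained at $p=\tfrac12$, recovering $c=4$. Under Conjecture~\ref{conj:weak.mult.RDC}, $R(\omega+1,t)\le R(k,k)$ with $k=\lceil\sqrt{(\omega+1)t}\,\rceil$, so the best known diagonal bound $R(k,k)\le\mu_{0}^{(1+o(1))k}$ (some $\mu_{0}<4$) gives $\beta(p)=(\log_{2}\mu_{0})\sqrt{p(1-p)}$ --- taking the minimum with $H$ far from $\tfrac12$, which changes nothing --- so that $p(1-p)/\beta(p)^{2}\equiv(\log_{2}\mu_{0})^{-2}$ is constant and $c=(\log_{2}\mu_{0})^{2}$; numerically this is~\eqref{eq:thm.bound.f.2}, and this step is just the upper half of Theorem~\ref{thm:diagonal_to_ratio} with $\mathcal L$ replaced by the current best exponent $\log_{2}\mu_{0}$. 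For~\eqref{eq:thm.bound.f.1} one instead feeds in the best \emph{unconditional} off-diagonal Ramsey bounds; these beat $H$ only on a bounded neighbourhood of $p=\tfrac12$ and, there, are a little weaker than $(\log_{2}\mu_{0})\sqrt{p(1-p)}$, and the same minimisation yields the slightly larger constant in~\eqref{eq:thm.bound.f.1}.

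The one delicate step is this final minimisation. Near $p=\tfrac12$ the function $p(1-p)/H(p)^{2}$ has Taylor expansion $\tfrac14+(\tfrac1{\ln2}-1)(p-\tfrac12)^{2}+\cdots$, so its minimum there, though genuine, is shallow; consequently any improvement over Erd\H{o}s--Szekeres drags $c$ below $4$ only if it holds on a sufficiently wide window around the Ramsey diagonal. Guaranteeing such a window is exactly the service of Conjecture~\ref{conj:weak.mult.RDC}, and the gap between~\eqref{eq:thm.bound.f.1} and~\eqref{eq:thm.bound.f.2} measures the cost of doing without it. Everything else is routine: the $o(1)$'s are uniform in $G$ since they depend only on $n$ and on $\omega$ in the bounded window where it matters, and pinning down the explicit constants $3.71943$ and $3.70831$ from the cited Ramsey bounds is an elementary, if fiddly, optimisation.
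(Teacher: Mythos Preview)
Your approach is essentially identical to the paper's: both run Erd\H{o}s's greedy colouring (the paper packages this as Lemma~\ref{lem:erdos} and Theorem~\ref{thm:upper}), reduce to bounding $\max_{p}\beta(p)^{2}/\bigl(p(1-p)\bigr)$ --- which is exactly the paper's $M^{2}$ --- and then plug in the diagonal bound under Conjecture~\ref{conj:weak.mult.RDC} for~\eqref{eq:thm.bound.f.2} and the off-diagonal bound~\eqref{eq:CGMS} with $\beta(p)=H(p)-\delta p\log e$, $\delta=0.14e^{-1}$, for~\eqref{eq:thm.bound.f.1}. The only substantive difference is that the paper carries out the final optimisation explicitly, setting $\varphi(x)=(H(x)-\delta x\log e)/\sqrt{x(1-x)}$ and locating its maximiser, whereas you defer this as ``elementary, if fiddly''; note also that your aside that the off-diagonal bound ``beats $H$ only on a bounded neighbourhood of $p=\tfrac12$'' is not quite right --- $\beta(p)<H(p)$ for every $p\in(0,\tfrac12]$ --- but this has no bearing on the argument.
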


This upper bound on $f(n)$ is the first improvement in the asymptotics of $f(n)$ since 1967.   

\subsection{Conjecture~\ref{conj:weak.mult.RDC} and the Diagonal Conjecture} \label{sec:conj}

We note that Conjecture~\ref{conj:weak.mult.RDC} is closely related to the following conjecture, which is named the \emph{Diagonal Conjecture} in~\cite{LIANG2019195}. 

\begin{conjecture}[Diagonal Conjecture (DC)]\label{conj:RDC}
For every $s_1\le s_2\le t_2\le t_1$ such that $s_1+t_1 \le s_2+t_2$ we have that 
    \begin{equation*}    
    R(s_1, t_1) \le R(s_2, t_2).
    \end{equation*}
\end{conjecture}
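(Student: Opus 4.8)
The plan is to attack \Cref{conj:RDC} by reducing it to a single ``atomic'' step and then confronting that. Using only the trivial monotonicity $R(a,b)\le R(a',b')$ for $a\le a'$ and $b\le b'$, together with an easy induction, the general inequality reduces to the anti-diagonal step
\[
  R(a,b)\le R(a+1,b-1)\qquad\text{for all }1\le a\le b.
\]
Indeed, given $s_1\le s_2\le t_2\le t_1$ with $s_1+t_1\le s_2+t_2$, set $d=(s_2+t_2)-(s_1+t_1)\ge 0$; monotonicity gives $R(s_1,t_1)\le R(s_1,t_1+d)$, and then $s_2-s_1$ applications of the atomic step move $(s_1,t_1+d)$ toward the diagonal at fixed sum until the pair $(s_2,t_2)$ is reached --- this is legitimate precisely because $s_1\le s_2\le t_2$. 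So \Cref{conj:RDC} is equivalent to $R$ being nondecreasing toward the diagonal along each anti-diagonal $\{(a,b):a+b=N\}$.

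For the atomic step the natural approach is combinatorial. Put $N=R(a,b)-1$ and fix a colouring of $K_N$ with no red $K_a$ and no blue $K_b$; since such a colouring automatically has no red $K_{a+1}$, the atomic step amounts to modifying it into a colouring that still has no red $K_{a+1}$ and in addition has no blue $K_{b-1}$ --- that is, we are allowed to \emph{create} red $K_a$'s (but no red $K_{a+1}$) in exchange for \emph{destroying} every blue $K_{b-1}$. I would look for a local operation realising this trade: recolouring some blue edges inside a blue $K_{b-1}$ to red, or a switching argument driven by the counts of red $K_a$'s and blue $K_{b-1}$'s. The hard part, and the reason \Cref{conj:RDC} remains open, is that no such operation is known: extremal and near-extremal colourings for $(a,b)$ and for $(a+1,b-1)$ can be structurally unrelated --- pseudorandom on one side, recursively or algebraically built on the other --- so there is no evident deformation between them, and crude counting loses too much.

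Given that obstacle, a realistic version of the plan settles for partial progress: (i) verifying further small cases against the known tables of Ramsey numbers; (ii) proving the atomic step in asymptotic ranges where the available upper bound on the skew side (e.g.\ \eqref{eq:CGMS}) comfortably beats an honest lower bound on the more-diagonal side, which already handles $a$ bounded and, more generally, any range in which the order of $R(a,\cdot)$ is understood; and --- most relevant here --- (iii) isolating the multiplicative strengthening ``$R(s_1,t_1)\le R(s_2,t_2)$ whenever $s_1\le s_2\le t_2\le t_1$ and $s_1t_1\le s_2t_2$''. A short computation shows that, under $s_1\le s_2\le t_2\le t_1$, the hypothesis $s_1+t_1\le s_2+t_2$ already forces $s_1t_1\le s_2t_2$, so this multiplicative form implies \Cref{conj:RDC}, and (taking $s_2=t_2=k$, together with monotonicity in the remaining case $t<k$) it also implies \Cref{conj:weak.mult.RDC}; by \Cref{thm:diagonal_to_ratio} the latter is exactly what would pin down $f(n)$. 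I expect (iii), rather than a full resolution of the unimodality of $R(s,t)$, to be the attainable target.
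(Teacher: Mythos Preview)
This statement is presented in the paper as an open \emph{conjecture}, not as a theorem; the paper offers no proof and explicitly remarks that ``even in the case $R(t-1,t+1)\le R(t,t)$, there is no relevant progress.'' There is therefore no proof in the paper to compare your proposal against.

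Your proposal is, appropriately, not a proof either: it is a reduction together with an honest assessment of what remains. The reduction of \Cref{conj:RDC} to the atomic anti-diagonal step $R(a,b)\le R(a+1,b-1)$ for $a<b$ is correct and standard, and you rightly identify this atomic step as the genuine obstruction --- exactly the case the paper singles out as wide open. Your item~(iii) is also correct: under $s_1\le s_2\le t_2\le t_1$, the additive hypothesis $s_1+t_1\le s_2+t_2$ forces $s_1t_1\le s_2t_2$ (set $a=s_2-s_1\ge 0$ and $b=t_1-t_2\ge 0$; then $b\le a$, and $s_2t_2-s_1t_1=at_1-bs_2\ge bt_1-bs_2\ge 0$ since $t_1\ge t_2\ge s_2$), so the multiplicative form (\Cref{conj:mult.RDC}) indeed implies \Cref{conj:RDC}, in agreement with the appendix. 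Your derivation of \Cref{conj:weak.mult.RDC} from the multiplicative form via monotonicity is likewise correct.

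In short, there is nothing to grade as a proof, because neither you nor the paper supplies one; what you have written is a sound discussion of why the conjecture is hard and of how it relates to the paper's other conjectures.
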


It is widely believed that the Diagonal Conjecture is very difficult to prove. Even in the case $R(t-1,t+1) \le R(t,t)$, there is no relevant progress. Observe that Conjecture~\ref{conj:weak.mult.RDC} can be stated as a weak version of the following conjecture.

\begin{conjecture}[Multiplicative form of DC] \label{conj:mult.RDC}
    For every $s_1\le s_2\le t_2\le t_1$ such that $s_1t_1 \le s_2t_2$ we have that
    \begin{equation*}    
    R(s_1, t_1) \le R(s_2, t_2).
    \end{equation*}
\end{conjecture}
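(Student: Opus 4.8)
The plan is to reduce Conjecture~\ref{conj:mult.RDC} to a short list of elementary inequalities between Ramsey numbers, and then to isolate precisely the portion that goes beyond the already very difficult Conjecture~\ref{conj:RDC}. Write $(a,b)\preceq(c,d)$ when $a\le c\le d\le b$ and $ab\le cd$, so that the conjecture asserts that $R$ is monotone with respect to $\preceq$, that is, $(a,b)\preceq(c,d)$ implies $R(a,b)\le R(c,d)$. First I would decompose an arbitrary relation $(s_1,t_1)\preceq(s_2,t_2)$ into elementary moves of two kinds: \emph{monotone moves} $(s,t)\to(s+1,t)$ and $(s,t)\to(s,t+1)$, for which the inequalities $R(s,t)\le R(s+1,t)$ and $R(s,t)\le R(s,t+1)$ are immediate (a coloring of $K_m$ with no red $K_s$ and no blue $K_t$ also has no blue $K_{t+1}$ and no red $K_{s+1}$, so $m<R(s,t)$ forces $m<R(s,t+1)$ and $m<R(s+1,t)$); and \emph{anti-diagonal moves towards the diagonal} $(s,t)\to(s+1,t-1)$ with $s\le t-2$, which preserve $s+t$, strictly increase the product, keep the first coordinate no larger than the second, and for which the required inequality $R(s,t)\le R(s+1,t-1)$ is exactly one step of the Ramsey Diagonal Conjecture.

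The second step is a bookkeeping lemma to the effect that these two families generate the order $\preceq$ precisely on the part where $s_1+t_1\le s_2+t_2$. Indeed, given such a relation, performing $t_1-t_2$ anti-diagonal moves from $(s_1,t_1)$ stays in the region where the first coordinate is at most the second (this uses $s_1+t_1\le s_2+t_2\le 2t_2$, which follows from $s_2\le t_2$) and ends at $(s_1+t_1-t_2,\,t_2)$; from there, $(s_2+t_2)-(s_1+t_1)\ge 0$ monotone moves reach $(s_2,t_2)$, and the product condition plays no role since on this part it is implied by the remaining hypotheses. Hence the case $s_1+t_1\le s_2+t_2$ of Conjecture~\ref{conj:mult.RDC} is in fact equivalent to Conjecture~\ref{conj:RDC} and follows from its one-step form.

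The remaining case $s_1+t_1>s_2+t_2$ is the genuinely new content, and here the obstruction is that no composition of the moves above --- nor of any moves that keep $R$ non-decreasing by elementary means --- can simultaneously decrease $s+t$ and move towards the diagonal. I would attack this case by a direct coloring argument. Note that necessarily $s_1<s_2$ here (if $s_1=s_2$, the product hypothesis forces $t_1\le t_2$ and hence $(s_1,t_1)=(s_2,t_2)$). Since $s_2>s_1$, any red/blue coloring of $K_N$ with $N=R(s_1,t_1)-1$, no red $K_{s_1}$, and no blue $K_{t_1}$ automatically has no red $K_{s_2}$; it therefore suffices to modify one such extremal coloring so as to destroy every blue $K_{t_2}$ without ever creating a red $K_{s_2}$, since this would witness $R(s_2,t_2)>N$. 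The resource to be exploited is the product inequality $s_1t_1\le s_2t_2$, rather than a bound on $s+t$: for instance one could try recoloring a controlled number of blue edges inside large blue cliques, trading the blue slack $t_1-t_2$ against the red slack $s_2-s_1$, whose interaction is governed by $s_2t_2-s_1t_1$.

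The step I expect to be the main obstacle is unavoidable: it is the atomic inequality behind the anti-diagonal move, whose simplest instance $R(t-1,t+1)\le R(t,t)$ is a well-known open problem for which no progress is known, so an unconditional proof is out of reach by current methods. Conditionally on Conjecture~\ref{conj:RDC}, however, the reduction above disposes of the case $s_1+t_1\le s_2+t_2$ at no additional cost, so the realistic target becomes the complementary case --- the part strictly stronger than Conjecture~\ref{conj:RDC} --- which I would pursue either via the recoloring idea just sketched or by sharpening the Erd\H{o}s--Szekeres recursion $R(s,t)\le R(s-1,t)+R(s,t-1)$ into a form that tracks the product $st$ and can be telescoped towards $(s_2,t_2)$.
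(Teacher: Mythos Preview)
This statement is presented in the paper as an open conjecture, not a theorem; the paper offers no proof, only the one-line remark that Conjecture~\ref{conj:mult.RDC} implies the additive Ramsey Diagonal Conjecture (Conjecture~\ref{conj:RDC}). Your proposal is likewise not a proof but a research outline, and you acknowledge this explicitly: the one-step anti-diagonal inequality $R(s,t)\le R(s+1,t-1)$ on which your decomposition rests is already open in its simplest instance $R(t-1,t+1)\le R(t,t)$, so no unconditional argument is available with current methods.

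That said, your structural analysis is correct and goes a little beyond what the paper records. The reduction of the case $s_1+t_1\le s_2+t_2$ to Conjecture~\ref{conj:RDC} via anti-diagonal and monotone moves is valid, and your observation that on this range the product hypothesis is redundant (the ordering $s_1\le s_2\le t_2\le t_1$ together with $s_1+t_1\le s_2+t_2$ already forces $s_1t_1\le s_2t_2$) is precisely the fact underlying the implication between the two conjectures that the paper asserts --- the paper's sentence actually has the hypotheses written in the wrong direction, while yours is the correct one. Your identification of the regime $s_1+t_1>s_2+t_2$ as the portion strictly stronger than Conjecture~\ref{conj:RDC} is also correct. The recoloring heuristic you sketch for that regime, however, is only a suggestion with no concrete mechanism, so the proposal remains a plan rather than a proof.
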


It is easy to see that if $s_1 \le s_2 \le t_2 \le t_1$ and $s_1t_1 \le s_2t_2$, then $s_1+t_1 \le s_2+t_2$ and therefore Conjecture~\ref{conj:mult.RDC} implies Conjecture~\ref{conj:RDC}.

\subsection*{Organization of the paper} In Section~\ref{sec:proof_1.2}, we introduce some notation used throughout the paper and state bounds on $f(n)$, namely Theorems~\ref{thm:upper} and~\ref{thm:lower}, establishing a connection between the asymptotic behavior of $f(n)$ and the Ramsey numbers $R(s,t)$. The proof of Theorem~\ref{thm:diagonal_to_ratio}, which is an immediate consequence of these results, is also given in Section~\ref{sec:proof_1.2}. In Section~\ref{subsec:numerics}, we establish the numerical bounds of Theorem~\ref{thm:g_upper}.

\section{Proof of Theorem~\ref{thm:diagonal_to_ratio}}
\label{sec:proof_1.2}
In this section, we establish bounds sufficient to prove Theorem~\ref{thm:diagonal_to_ratio}. Let us define 
\begin{equation*}
g(n) \coloneqq \frac{(\log n)^2}{n} f(n),\quad
M\coloneq \limsup_{t \to \infty } \; \max_{s\le t} \ \frac{\log(R(s,t))}{\sqrt{st \ }},
\end{equation*}
\begin{equation*}
L\coloneqq \liminf\limits_{k \to \infty} \; \frac{\log (R(k,k))}{k}
\quad \text{ and } \quad D\coloneq\limsup_{k \to \infty} \;\frac{\log (R(k,k))}{k}.
\end{equation*}

The following theorems establish more precisely the connection between limits \eqref{limit1} and \eqref{limit2}. The first result gives the exactly value of $\limsup g(n)$ in terms of the Ramsey numbers.

\begin{theorem}\label{thm:upper}
    The function $g(n)$ satisfies
    \begin{equation*}
         \limsup_{n \to \infty} g(n) = M^2 .
    \end{equation*}
\end{theorem}

The second one establishes a lower bound on $\liminf g(n)$ in terms of the Ramsey numbers.

\begin{theorem}\label{thm:lower}
    The function $g(n)$ satisfies
    \begin{equation*}
         \liminf_{n \to \infty} g(n) \ge L^2.
    \end{equation*}
\end{theorem}

Before proceeding to the proof of these results, let us show how to derive Theorem~\ref{thm:diagonal_to_ratio}.

\begin{proof}[Proof of Theorem~\ref{thm:diagonal_to_ratio}]
The assumption that $\lim\frac{\log R(k,k)}{k}$ exists and equals $\ell$ implies $L=D=\ell$. Thus, by Theorems~\ref{thm:upper}~and~\ref{thm:lower}, it suffices to prove that $D=M$. To this end, observe that restricting the maximum to $s=t$ in the definition of $M$ yields
\begin{equation}\label{ineq:limsups}
D=\limsup_{k \to \infty} \; \frac{\log (R(k,k))}{k} \le \limsup_{t \to \infty } \; \max_{s\le t} \ \frac{\log(R(s,t))}{\sqrt{st \ }} = M. 
\end{equation}

On the other hand, if $0 < s \le t$ and $k$ is the positive integer such that $(k-1)^2 < st \le k^2$, Conjecture~\ref{conj:weak.mult.RDC} implies that 
\begin{equation*}
    R(s,t) \le R(k,k) \le 2^{(D+o_k(1))k}.
\end{equation*}

Therefore, we have
\begin{equation*}
    \frac{\log(R(s,t))}{\sqrt{st \ }} \le \frac{\log(R(k,k))}{k-1} \le D +o_k(1) = D +o_t(1),
\end{equation*}
which implies $M \le D$. Combined with \eqref{ineq:limsups}, we conclude that $D = M$.
\end{proof}

We now proceed to prove Theorems~\ref{thm:upper} and~\ref{thm:lower}, starting with Theorem~\ref{thm:lower}.

\begin{proof}[Proof of \Cref{thm:lower}]
For each $n\in \N$, let $k_n\in \N$ be such that $R(k_n,k_n) \le n < R(k_n+1,k_n+1)$.
By the definition of $L$, observe that 
\begin{equation}\label{eq:bound.logn}
    \log n \ge \big(L+o_n(1)\big)k_n.
\end{equation}

Let $G$ be a graph on $n$ vertices with no clique or independent set of size $k_n+1$. In other words, $\alpha(G) \le k_n$ and $\omega(G) \le k_n$. As $\chi(G) \ge n/\alpha(G)$, we conclude that
\begin{equation*}
    g(n)=\frac{f(n)}{n/(\log n)^2} \ge \frac{\chi(G)}{\omega(G)}\cdot\frac{(\log n)^2}{n} \ge 
\frac{(\log n)^2}{\alpha(G)\omega(G)}\ge
\frac{(\log n)^2}{k_n^2} \ge
L^2 + o_n(1),
\end{equation*}
where the last inequality is true by \eqref{eq:bound.logn}. Hence, $\liminf g(n) \ge L^2$.
\end{proof}

Now we focus on proving Theorem~\ref{thm:upper}. The proof relies on the following bound on the chromatic number of a graph from~\cite{erdos}, which is obtained by greedily picking maximum independent sets as color classes until few vertices remain, with each remaining vertex receiving a new color.

\begin{lemma}\label{lem:erdos}
    If $\alpha(G') \ge r$ for every subgraph $G' \subset G$ with at least $m_0$ vertices, then 
    \begin{equation*}
        \chi(G) \le \frac{n}{r} + m_0 .
    \end{equation*} 
\end{lemma}

With this lemma in hand, the proof is straightforward.

\begin{proof}[Proof of \Cref{thm:upper}]
Let $G$ be a graph on $n$ vertices and $m \in \mathbb{N}$ be such that 
\begin{equation*}
    n \ge m \ge \frac{n}{(\log n)^3}.
\end{equation*}

For $1\le s \le t$ such that $m < R(s+1, t+1)$, we have that either $s< \log t$ and 
\begin{equation*}
    R(s+1, t+1) \le \binom{s+t}{s} = 2^{o_t\left( \sqrt{st } \right)},
\end{equation*}
or $s \ge \log t$ and 
\begin{equation*}
    R(s+1,t+1) \le 2^{(M+o_{t}(1)) \sqrt{st}},
\end{equation*}
where the last inequality follows from the definition of $M$ and the fact that $s$ goes to infinity with~$t$. Thus, in any case, we obtain
\begin{equation}\label{eq:bound.logm}
    \log m \le \big(M+o_m(1)\big) \sqrt{st}.
\end{equation}

Notice that any graph $H$ on $m$ vertices provides a red/blue edge-coloring of $K_m$ implying that $m < R(\omega(H)+1, \alpha(H)+1)$. Thus, by \eqref{eq:bound.logm}, for every subgraph $G'$ of $G$ on $m$ vertices, we have
\begin{equation*}
     \omega(G') \cdot \alpha(G') \ge 
\Big( \frac{1}{M^2} + o_m(1) \Big) (\log m)^2. 
\end{equation*}
Since $\omega(G') \le \omega(G)$, we obtain that \begin{equation*}
    \alpha(G') \ge
\Big( \frac{1}{M^2} + o_m(1) \Big) \frac{(\log m)^2}{\omega(G)}.
\end{equation*} 
Hence, together with the fact that $\log m = \big(1+o_n(1)\big) \log n$, \Cref{lem:erdos} yields
\begin{equation*}
    \chi(G) \le \big(M^2 +o_n(1)\big)\frac{n\cdot\omega(G)}{(\log n)^2} + \frac{n}{(\log n)^3}.
\end{equation*}
Then, we conclude that
\begin{equation}\label{eq:UpperBoundM2}
    \frac{\chi(G)}{\omega(G)} \le \big(M^2 +o_n(1)\big)\frac{n}{(\log n)^2}.
\end{equation}

Moreover, we know that there exist an increasing sequence $\{t_i\}_{i \in \N}$ and a sequence $\{s_i\}_{i \in \N}$ such that $s_i \le t_i$ for every $i \in \N$, and
\begin{equation*}
    \lim\limits_{i \to \infty} \frac{\log (R(s_i,t_i))}{\sqrt{s_it_i\ }}=M.
\end{equation*}

Then, for $n_i \coloneqq R(s_i+1, t_i+1)-1$, we have $(\log n_i)^2 \ge s_it_i\big(M^2+o_i(1)\big)$, since $n_i > R(s_i,t_i)$. The definition of $n_i$ also implies that there is a graph $G_i$ on $n_i$ vertices satisfying $\alpha(G_i) \le s_i$ and $\omega(G_i) \le t_i$. Thus we have
\begin{equation*}
     g(n_i)=\frac{f(n_i)}{n_i/(\log n_i)^2} \ge \frac{\chi(G_i)}{\omega(G_i)}\cdot\frac{(\log n_i)^2}{n_i} \ge \frac{(\log n_i)^2}{\alpha(G_i)\omega(G_i)}\ge\frac{(\log n_i)^2}{s_it_i} \ge M^2 + o_i(1),
\end{equation*}
which together with \eqref{eq:UpperBoundM2} implies that $\limsup g(n) = M^2$.\qedhere
\end{proof}

\section{Proof of Theorem~\ref{thm:g_upper}} \label{subsec:numerics}
In this section, we prove Theorem~\ref{thm:g_upper}. The currently best known upper bound for $R(k,k)$ is
\begin{equation}\label{eq:diagonal_upper_bound}
    R(k,k) \le \big( 4e^{-0.14e^{-1}} \big)^{k+o(k)}.
\end{equation} 
This bound follows from the recent breakthrough result by Campos, Griffiths, Morris, and Sahasrabudhe~\cite{CGMS}, along with its improvement by Gupta, Ndiaye, Norin, and Wei~\cite{GNNW}. Precisely, they showed that
\begin{equation}\label{eq:CGMS}
R(s, t) \le e^{-\delta s+o(t)} \binom{s+t}{s}
\end{equation}
for $s\le t$, where $\delta = 0.14e^{-1}$.
With this in hand, we can now prove Theorem~\ref{thm:g_upper}.

\begin{proof}[Proof of Theorem~\ref{thm:g_upper}]
As seen in the proof of Theorem~\ref{thm:diagonal_to_ratio}, if Conjecture~\ref{conj:weak.mult.RDC} holds, then $D=M$. Hence, Theorem~\ref{thm:upper} and \eqref{eq:diagonal_upper_bound} imply~\eqref{eq:thm.bound.f.2}. Indeed,
\begin{equation*}
    f(n) \le \big( ( \log 4e^{-0.14e^{-1}} )^2 + o(1) \big) \frac{n}{(\log n)^2} < \big( 3.70831 + o(1) \big) \frac{n}{(\log n)^2}.
\end{equation*}

Thus, we now focus on proving~\eqref{eq:thm.bound.f.1}. For $s\le t$, let $x = \frac{s}{s+t}$.
First, note that $\log R(s,t) = o(\sqrt{st})$ when $s=o(t)$. Then, we assume that $x \in [\eps, 1/2]$ for some $\eps > 0$. Observe that \eqref{eq:CGMS} implies
\begin{equation*}
       \frac{\log(R(s,t))}{\sqrt{st \ }} \le \frac{\log \binom{s+t}{s} - \delta s \log e+o_t(t)}{\sqrt{st}} = \frac{\log \binom{s/x}{s}-\delta s \log e}{s\sqrt{\frac{1-x}{x}}}+o_t(1).
\end{equation*}

Since $\log \binom{n}{\alpha n} \le H(\alpha) \cdot n$ for $\alpha\in (0,1)$, where $H(x) = -x \log x - (1-x) \log (1-x)$ is the binary entropy function, we obtain 
\begin{equation*}
    \frac{\log \binom{s/x}{s} - \delta s \log e}{s\sqrt{\frac{1-x}{x}}}\le\frac{H(x) \cdot \frac{s}{x}  - \delta s \log e}{s\sqrt{\frac{1-x}{x}}} =
\frac{H(x)  - \delta x \log e}{\sqrt{x(1-x)}}.
\end{equation*}

Therefore, we have that
\begin{equation*}
 \frac{\log(R(s,t))}{\sqrt{st \ }} \le \frac{ H(x)  - \delta x \log e}{\sqrt{x(1-x)}}+o_t(1).
\end{equation*}

Define $\varphi(x) \coloneq \frac{H(x)-\delta x \log e}{\sqrt{x(1-x)}}$. By differentiating, the maximum of $\varphi(x)$ is achieved for some $x \in (0,1/2]$ satisfying 
\[(1-x)\log(1-x)-x\log(x) - \delta x \log e = 0.\]
For such $x$, since $\delta = 0.14e^{-1}$, we have that $\varphi(x)^2 < 3.71943$ and, by Theorem~\ref{thm:upper}, we conclude 
\begin{equation*}
    \limsup_{n \to \infty} g(n) = M^2  \le 
    \max_{x \in (0, 1/2]} \varphi(x)^2 < 3.71943.  \qedhere
\end{equation*}
\end{proof}

\section*{Acknowledgements}

The authors thank Rob Morris for his helpful suggestions that heavily improved the presentation of this paper.

\bigskip
\bibliographystyle{abbrv}
\def\bibfont{\footnotesize}
\bibliography{refs}
\end{document}